\tikzset{snake it/.style={decorate, decoration=snake}}
\tikzstyle{edge}=[very thick]
\definecolor{bostonuniversityred}{rgb}{0.8, 0.0, 0.0}
\definecolor{arsenic}{rgb}{0.23, 0.27, 0.29}
\tikzstyle{diredge}=[postaction={decorate,decoration={markings,
\newcommand{\fitellipsis}[2] % first and second node names without parentheses
{\draw [fill=gray]let \p1=(#1), \p2=(#2), \n1={atan2(\y2-\y1,\x2-\x1)}, \n2={veclen(\y2-\y1,\x2-\x1)}
    in ($ (\p1)!0.5!(\p2) $) ellipse [ x radius=\n2/2+0cm, y radius=0.4cm, rotate=\n1];
}
\newtheorem{theorem}{Theorem}[section]
\newtheorem{lemma}[theorem]{Lemma}
\newtheorem{proposition}[theorem]{Proposition}
\theoremstyle{definition}
\newtheorem{definition}[theorem]{Definition}
\author[Martinsson, Steiner]{Anders Martinsson \and Raphael Steiner }
\address[Martinsson]{Department of Computer Science, Institute of Theoretical Computer Science, ETH Z\"{u}rich, Switzerland}
\email{\tt {anders.martinsson}@inf.ethz.ch}
\address[Steiner]{Department of Computer Science, Institute of Theoretical Computer Science, ETH Z\"{u}rich, Switzerland}
\email{\tt {raphaelmario.steiner}@inf.ethz.ch}
\thanks{The second author was supported by an ETH Z\"{u}rich Postdoctoral Fellowship.
}
\date{\today}
\title{Cycle lengths modulo $k$ in expanders}
\begin{document} 
\maketitle

\begin{abstract}
Given a constant $\alpha>0$, an $n$-vertex graph is called an \emph{$\alpha$-expander} if every set $X$ of at most $n/2$ vertices in $G$ has an external neighborhood of size at least $\alpha|X|$. Addressing a question posed by Friedman and Krivelevich in [\emph{Combinatorica}, 41(1), (2021), pp. 53--74], we prove the following result: 

Let $k>1$ be an integer with smallest prime divisor $p$. Then for $\alpha>\frac{1}{p-1}$ every sufficiently large $\alpha$-expanding graph contains cycles of length congruent to any given residue modulo $k$. 

This result is almost best possible, in the following sense: There exists an absolute constant $c>0$ such that for every integer $k$ with smallest prime divisor $p$ and for every positive $\alpha<\frac{c}{p-1}$, there exist arbitrarily large $\alpha$-expanding graphs with no cycles of length $r$ modulo $k$, for some $r \in \{0,\ldots,k-1\}$. 
\end{abstract}

\section{Introduction}

Extremal or structural conditions imposed on graphs which enforce their containment of cycles of certain types and lengths have a long and rich history in graph theory. Most generally, given a class of graphs $\mathcal{G}$ and a subset $L \subseteq \mathbb{N}$ of positive integers, results in this area typically establish conditions which enforce a given graph $G \in \mathcal{G}$ to contain a cycle whose length is in $L$. 

Natural restrictions on the cycle lengths that have been studied intensively are for example of the form  $L=[r,\infty)$, i.e., we search for sufficiently long cycles; $L=\{r\}$, i.e., we search for a cycle of specific length; or $L=\{\ell \in \mathbb{N}|\ell \equiv r \text{ (mod }k)\}$, i.e., we search for a cycle of length congruent to a given residue modulo $k$.

For each of the above types of restrictions and various classes of graphs $\mathcal{G}$, there are numerous results in the literature. It would be impossible to give a just overview of all the individual results here, let us instead refer to~\cite{bondy,erdos,fan,gao,gyarfas,kostochka,liu,mihok,sudakov} for some selected articles on cycle lengths in graphs. Concerning cycle lengths modulo $k$ specifically, let us mention in the following a selected few of noteworthy prior results in more detail, we refer to~\cite{alon4,chen,dean,erdos2,fan,verstraete} for further results on this topic. 

Confirming a conjecture of Burr and Erd\H{o}s, Bollob\'{a}s proved in~\cite{bollobas} that given two fixed integers $r$ and $k$ such that $k$ is odd, every graph of sufficiently large average degree in terms of $r$ and $k$ contains a cycle of length congruent to $r$ modulo $k$. Note that the restriction to odd numbers $k$ in this result is necessary, as dense bipartite graphs do not contain cycles of odd length. 

The bound on the average degree provided by Bollob\'{a}s's proof was not tight, and several improvements have appeared in the literature, culminating in the paper~\cite{sudakov} by Sudakov and Verstra\"{e}te, who determined the minimum average degree required to force a cycle of length $r$ modulo $k$ up to a constant multiplicative error for all possible choice of $r$ and $k$. Thomassen conjectured in~\cite{thomassen} that for every natural numbers $r$ and $k$, every graph of minimum degree at least $k+1$ contains a cycle of length congruent to $2r$ modulo $k$. This conjecture along with related conjectures was recently confirmed in a breakthrough-paper by Gao, Huo, Liu and Ma~\cite{gao}, and previously special cases of the conjecture were proven by Liu and Ma~\cite{liu}. 

While all the previously mentioned results deal with sufficiently dense graphs, also in well-behaved classes of sparse graphs, even in such with bounded maximum degree, one may expect a rich cycle structure modulo $k$. A result confirming this intuition in the case of cubic graphs was recently obtained by Lyngsie and Merker~\cite{lyngsie}, who proved that for every odd integer $k$ there exists an integer $n_0=n_0(k)$ such that every $3$-connected cubic graph on at least $n_0$ vertices contains cycles of lengths congruent to any given residue modulo $k$. Similarly, Thomassen had shown in~\cite{thomassen2} that for odd integers $k$ cubic graphs of sufficiently large girth in terms of $k$ contain cycles of all residues modulo $k$. 

In this paper, we add to this long line of research by establishing almost tight conditions for the existence of cycles of lengths congruent to all residues modulo $k$ in \emph{expanders}, also called \emph{expanding graphs}. Expanders form a rich and well-studied class of graphs with many interesting and useful properties. Roughly speaking, a graph is an expander if the subsets $X$ of vertices in the graph induce neighborhoods $N(X):=\{v \in V\setminus X|v \text{ has a neighbor in }X\}$ whose size grows with $|X|$ reasonably fast. Concretely, one usually requires a linear or at least a polynomial dependency on $|X|$. Because of their numerous desirable properties and applications, constructing expanders with certain features and understanding their general structural properties is and has been an active field of research, we refer to~\cite{hoory,krivel} for informative survey articles on expanding graphs.

In this paper, we adopt the following fairly common variant of expansion called \emph{$\alpha$-expansion}.

\begin{definition}[cf.~\cite{krivel}]
Let $\alpha>0$. Then an $n$-vertex graph $G$ is called $\alpha$-expanding (or also an $\alpha$-expander) if every subset $X$ of vertices of $G$ with $|X| \le \frac{n}{2}$ satisfies $|N_G(X)| \ge \alpha |X|$. 
\end{definition}

Note that the above definition is only sensible for $\alpha \le 1$, since no set $X$ of exactly $\frac{n}{2}$ vertices in an $n$-vertex graph can have an external neighborhood of size more than $1\cdot |X|$.

This notion has been previously utilized in several places, compare for instance the recent survey article~\cite{krivel}, or~\cite{alon3,friedman} and Chapter~9 in~\cite{alonspencer}. The notion of $\alpha$-expansion can be seen as a common denominator of several other previously considered concepts of expander graphs which is still weak enough to encompass several important graph classes, yet strong enough to guarantee interesting structural properties and substructures, see the discussion in~\cite{krivel}. Many interesting classes of graphs fall under the umbrella of $\alpha$-expanders for some constant $\alpha>0$: They can be obtained from supercritical random graphs~\cite{krivel2}, are contained in graphs without small separators~\cite{krivel}, and also the famous $(n,d,\lambda)$-graphs, these are the $n$-vertex $d$-regular graphs with second largest eigenvalue in absolute value $\lambda$, are $\alpha$-expanders for $\alpha=\frac{d-\lambda}{2d}$ (cf.~Corollary 9.9.2 in~\cite{alonspencer}).

Our investigations in this paper are directly motivated by the paper by Friedman and Krivelevich~\cite{friedman}, in which the authors obtain several novel results on the cycle spectrum (i.e., the set of cycle lengths) in $\alpha$-expanding graphs. For instance, they show that in sufficiently large $\alpha$-expanding graphs, the cycle spectrum has only ``gaps'' of constant size. More concretely (cf. Lemma~\ref{lemma:cyclelength} in Section~\ref{sec:proofs}), they prove that for every given integer $\ell \in [O(\ln n), \Omega(n)]$, an $n$-vertex $\alpha$-expanding graph contains a cycle whose length approximates $\ell$ up to a constant error (which depends only on $\alpha$), this also implies that $\alpha$-expanding graphs contain cycles of linearly (in $n$) different lengths. 

The modular arithmetic of cycle lengths has not yet been intensively researched in expanding graphs. Consequently, at the end of their paper, Friedman and Krivelevich raised the open problem to find conditions guaranteeing the containment of cycles of length $r$ modulo $k$ in $\alpha$-expanding graphs, and explicitly asked the question whether cycles of length $0$ modulo $k$ may be guaranteed in $\alpha$-expanding graphs.

The latter question was recently taken up in the paper by Alon and Krivelevich~\cite{alon}, in which they provided a positive answer as follows. In one of their main results, Alon and Krivelevich proved that there exists an absolute constant $C>0$ such that every graph containing a $K_f$-minor for $f \ge Ck \ln k$ contains a cycle of length $0$ mod $k$. This result was subsequently slightly improved by M\'{e}sz\'{a}ros and the second author in~\cite{tamas}, showing that requiring $f \ge Ck$ for some constant $C$ suffices. 
As noted by Alon and Krivelevich in~\cite{alon}, it follows from a result by Kawarabayashi and Reed~\cite{kawarabayashi} that every $n$-vertex $\alpha$-expanding graph contains a clique $K_f$ of order $f \ge c(\alpha) \sqrt{n}$ as a minor, where $c(\alpha)$ is a positive constant depending only on $\alpha$. Combining this with their result on divisible cycles in complete minors, they concluded that for every fixed integer $k$, sufficiently large $\alpha$-expanders contain cycles of length $0$ modulo $k$. Combining with the improved quantitative bound from~\cite{tamas}, we may state the following version of the result by Alon and Krivelevich.

\begin{theorem}[cf.~\cite{alon,tamas}]\label{thm:0modk}
Let $k>0$ be an integer and $\alpha>0$. Then there exists an integer $n_0=n_0(\alpha,k)=O_\alpha(k^2)$ such that every $\alpha$-expanding graph on at least $n_0$ vertices contains a cycle of length congruent to $0$ modulo $k$.
\end{theorem}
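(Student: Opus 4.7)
The proof is essentially a combination of two external ingredients explicitly flagged in the discussion preceding the theorem, so my plan is simply to chain them together and track constants carefully.

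First I would invoke the result of Kawarabayashi and Reed~\cite{kawarabayashi} referenced above: every $n$-vertex $\alpha$-expanding graph $G$ contains a complete minor $K_f$ with $f \ge c(\alpha)\sqrt{n}$, where $c(\alpha)>0$ depends only on $\alpha$. This is where the expansion hypothesis is used in the proof; beyond giving a large clique minor, the $\alpha$-expansion is not needed again. Conceptually, the step is to convert a global expansion hypothesis into the much more rigid substructure of a large complete minor.

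Second, I would apply the improved quantitative form of the Alon--Krivelevich theorem due to M\'esz\'aros and Steiner~\cite{tamas}: there exists an absolute constant $C>0$ such that every graph containing a $K_f$-minor with $f \ge Ck$ contains a cycle of length divisible by $k$. Thus, if I choose $n_0 := \left\lceil (Ck/c(\alpha))^2 \right\rceil$, then for any $\alpha$-expanding graph on at least $n_0$ vertices, the first step produces a $K_f$-minor with
\[
f \;\ge\; c(\alpha)\sqrt{n_0} \;\ge\; Ck,
\]
and the second step then yields a cycle of length $\equiv 0 \pmod k$. The bound $n_0 = (Ck/c(\alpha))^2 = O_\alpha(k^2)$ matches the quantitative claim in the statement.

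No real obstacle arises because both ingredients are cited as already-known results; the only thing to check is the arithmetic linking the two bounds and the observation that the constant $c(\alpha)$ depends solely on $\alpha$, not on $k$ or $n$, so the resulting $n_0$ has the correct dependence on $k$. The only subtlety worth mentioning is that the two cited theorems are applied as black boxes in sequence, so the proof in the paper is expected to be a brief two-step deduction rather than a fresh argument.
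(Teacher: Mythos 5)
Your proposal is correct and follows exactly the derivation the paper sketches in the paragraph preceding the theorem: the Kawarabayashi--Reed separator result gives a $K_f$-minor with $f \ge c(\alpha)\sqrt{n}$ in any $\alpha$-expander, and the M\'esz\'aros--Steiner bound $f \ge Ck$ for divisible cycles then yields $n_0 = O_\alpha(k^2)$. The paper offers no separate proof of this theorem beyond that two-step combination, so your argument matches it in both structure and quantitative bookkeeping.
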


While the above result largely resolves the question of Friedman and Krivelevich from~\cite{friedman} in the special case $r=0$, it leaves open all the remaining cases when $r \not\equiv0 \text{ (mod }k)$. In fact, the method used by Alon and Krivelevich to first obtain a large complete minor and then to find in it a cycle with the desired length modulo $k$ does not seem feasible for cycles of length non-zero modulo $k$: For every fixed $k \in \mathbb{N}$ there are graphs containing arbitrarily large clique minors, but in which all cycles are of length divisible by $k$\footnote{A possible construction looks as follows: Take a complete graph $K_f$ for some large integer $f$, and subdivide each of its edges into a path of length $k$. This graph contains $K_f$ as a minor, but all its cycles are of length divisible by $k$.}. 

In this paper, we take up Friedman and Krivelevich's question for non-zero residues modulo $k$, and find a complete answer for the question under which circumstances we may guarantee cycles of \emph{all possible} residues $r$ modulo $k$ in large $\alpha$-expanding graphs. 

The following is our main result. 

\begin{theorem}\label{thm:modk}
Let $k>1$ be an integer, and let $p$ be the smallest prime divisor of $k$. Then for every $\alpha>\frac{1}{p-1}$ there exists $n_0=n_0(\alpha,k) \in \mathbb{N}$ such that for each $r \in \{0,1,\ldots,k-1\}$ every $\alpha$-expanding graph on at least $n_0$ vertices contains a cycle of length congruent to $r$ modulo $k$. 
\end{theorem}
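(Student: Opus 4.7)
My plan is to combine the existence of a long cycle of length divisible by $k$ (guaranteed by Theorem~\ref{thm:0modk}) with an expansion-based ``ear'' gadget that flexibly shifts the cycle length modulo $k$.

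\emph{Step 1 — a long divisible base cycle.} I would begin by invoking Theorem~\ref{thm:0modk} together with Lemma~\ref{lemma:cyclelength} to extract a cycle $C_0 \subseteq G$ of length $\equiv 0 \pmod k$ which is moreover long, say of length at least $\Theta_\alpha(\log n)$. After removing $V(C_0)$, the induced subgraph on $V(G)\setminus V(C_0)$ is still $\alpha'$-expanding on enough vertices, with $\alpha'$ only slightly smaller than $\alpha$, hence still $> 1/(p-1)$ for $n$ large enough. This leaves room for further constructions.

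\emph{Step 2 — ears via the threshold $\alpha > 1/(p-1)$.} Fix a vertex $u \in V(C_0)$ and grow a BFS forest from $u$ inside $G \setminus (V(C_0)\setminus\{u\})$. I would argue that the expansion hypothesis is precisely strong enough to find, at a bounded BFS depth, a family of $p$ internally disjoint paths $P_1,\dots,P_p$ from $C_0$ back to $C_0$ (``ears''), whose endpoints and lengths can be controlled. The constant $p-1$ appearing in $1/(p-1)$ should be read as the maximum ``branching deficit'' an $\alpha$-expander can afford before a sub-expander disconnects; just above this threshold one forces a $p$-fold branching which, combined with Menger-type rerouting through different BFS layers, yields ears whose length residues cover at least $p$ distinct values modulo $p$. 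Each ear $P_i$, combined with one of the two arcs of $C_0$ between its endpoints, produces a cycle whose length modulo $k$ is computable from $\ell(P_i)$ and the arc length.

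\emph{Step 3 — from $\mathbb{Z}/p$ to $\mathbb{Z}/k$.} Let $A \subseteq \mathbb{Z}/k$ denote the set of residues realized as cycle lengths from the above construction. Step~2 together with the Cauchy--Davenport theorem applied to the prime $p \mid k$ shows that the projection of $A$ to $\mathbb{Z}/p$ is surjective. To upgrade to $\mathbb{Z}/k$, I would iterate the construction on suitable subgraphs and invoke Theorem~\ref{thm:0modk} with moduli $k/p$, $k/p^2$, \ldots, using each to fix increasingly fine modular information of the arc lengths and ear lengths. Deforming $C_0$ (replacing arcs by rerouted segments of controllable length using the expansion of the complement) then allows one to realize every residue modulo $k$ as a cycle length.

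\emph{Main obstacle.} I expect Step~3 to be the crux: the threshold $\alpha > 1/(p-1)$ is tight for the prime $p$, so naturally yields modulo-$p$ control, whereas $\mathbb{Z}/k$ has extra structure beyond $\mathbb{Z}/p$. Overcoming this requires carefully iterating the ear construction in conjunction with applications of Theorem~\ref{thm:0modk} for various divisors of $k$, and keeping track of which arcs and ears remain internally disjoint at each stage. A secondary technical difficulty, handled by passing to expanding subgraphs of size $(1-o(1))n$ after each deletion, is that all gadgets must be simultaneously realizable inside the same graph.
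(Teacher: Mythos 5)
Your proposal has genuine gaps at its two load-bearing steps, and the heuristics offered there do not reflect how the threshold $\alpha>\frac{1}{p-1}$ can actually be exploited. In Step~2 you assert that expansion just above $\frac{1}{p-1}$ forces, via BFS layers and Menger-type rerouting, $p$ ears on $C_0$ whose lengths realize $p$ distinct residues modulo $p$. No mechanism is given for this, and none is apparent: expansion controls sizes of neighborhoods and hence distances and cycle lengths up to additive error (as in Lemma~\ref{lemma:cyclelength}), but it gives no direct control of path lengths \emph{modulo} $p$; obtaining such modular control is precisely the hard part of the problem, and ``branching deficit'' is not a property that the definition of $\alpha$-expansion encodes. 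In the paper the constant $\frac{1}{p-1}$ enters quite differently: one builds a chain of cycles $C_1,\dots,C_t$ with exponentially growing lengths, tracks the subset-sum set $B_i\subseteq\mathbb{Z}_k$ of arc-length differences, and must attach the next connecting path at a vertex of $C_i$ avoiding a ``bad'' set $K$ of vertices whose arc-difference lies in the stabilizer of $B_i$; since that stabilizer is a proper subgroup, $|K|\lesssim |V(C_i)|/p$, and the removal lemma (Lemma~\ref{lemma:delete}) deletes an additional set of size roughly $|X|/\alpha<(p-1)|X|$, so $\alpha>\frac{1}{p-1}$ is exactly what guarantees that some vertex of $C_i$ survives. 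Nothing resembling this accounting appears in your sketch.

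Step~3 is also not viable as stated. For general $k$ the moduli $k/p, k/p^2,\dots$ need not be integers (take $k=pq$ with $q$ a larger prime), and even when they are, invoking Theorem~\ref{thm:0modk} for a divisor of $k$ only yields a cycle of length divisible by that divisor; it gives no way to ``fix increasingly fine modular information'' of arc and ear lengths, nor to keep the successive gadgets disjoint while preserving the residues already achieved. The paper avoids this lifting problem entirely by working in $\mathbb{Z}_k$ from the start: a Kneser/stabilizer argument shows each new cycle in the chain strictly enlarges $B_i$ unless $B_i=\mathbb{Z}_k$ already, so after at most $k$ steps all residues are realized simultaneously by routing through the chain and closing up through a reservoir. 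Finally, a smaller issue: in Step~1, Theorem~\ref{thm:0modk} combined with Lemma~\ref{lemma:cyclelength} does not produce a \emph{long} cycle of length $\equiv 0\pmod k$, since Lemma~\ref{lemma:cyclelength} controls length only up to an additive constant $A$ and says nothing modulo $k$; the divisible cycle you get may be short, so the claimed starting configuration is not justified either.
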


We will present the proof of Theorem~\ref{thm:modk} in Section~\ref{sec:proofs}. It is rather different from the complete minor-approach from~\cite{alon}, and instead follows a more direct plan, in which we carefully construct a closed chain of several paths and cycles with special conditions, through which we then may route cycles of all possible lengths modulo $k$.

Note that in contrast to Theorem~\ref{thm:0modk}, in Theorem~\ref{thm:modk} we need the additional assumption that $\alpha$ is not too small ($\alpha>\frac{1}{p-1}$, where $p$ is the smallest prime factor of $k$). While this might seem restrictive, it is in fact close to optimal: The following proposition shows that if we were to choose $\alpha$ just by a multiplicative constant smaller, then we would no longer be able to guarantee cycles of all residues modulo $k$, even in arbitrarily large $\alpha$-expanders. 

\begin{proposition}\label{prop:nomodk}
There exists an absolute constant $c>0$ such that the following holds. 

For every integer $k>1$ with smallest prime divisor $p$, and every $\alpha \in (0,\frac{c}{p-1})$ there exist arbitrarily large $\alpha$-expanding graphs in which all cycles are of length divisible by $p$. 
\end{proposition}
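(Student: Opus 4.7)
My plan is to construct the required graphs as subdivisions of a good fixed-degree expander. First, fix an integer $d\geq 3$, and let $H$ be a $d$-regular $\beta$-expander on $m$ vertices for some absolute constant $\beta>0$; such $H$ exist for arbitrarily large $m$ (for instance random $d$-regular graphs, or explicit Ramanujan graphs). Define $G=G_p(H)$ by replacing every edge of $H$ with an internally disjoint path of length $p$, and denote by $P_e$, $I_e$ the chain and its set of $p-1$ internal vertices attached to $e\in E(H)$. Since every new internal vertex has degree $2$ in $G$, any simple cycle of $G$ that touches an internal vertex of a chain $P_e$ must traverse all of $P_e$; hence every cycle of $G$ is obtained from a cycle of $H$ by replacing each edge with a path of length $p$, and its length is divisible by $p$, as desired. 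It remains to show that $G$ is $\alpha_0$-expanding for some $\alpha_0 \ge c/(p-1)$ with an absolute constant $c>0$: then letting $m\to\infty$ yields, for every $\alpha<c/(p-1)$, arbitrarily large $\alpha$-expanding graphs whose cycles all have length divisible by $p$.

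To verify the expansion, I will take $X\subseteq V(G)$ with $|X|\le n/2$, where $n=m+(p-1)dm/2$, and split $X=X_H\cup X_I$ with $X_H=X\cap V(H)$. For each $e\in E(H)$ I call the chain $P_e$ \emph{full}, \emph{partial}, or \emph{empty} according as $X\cap I_e$ equals $I_e$, is a proper nonempty subset, or is empty, and I let $F,P$ denote the sets of full and partial chains. Three local observations provide vertices of $N_G(X)$: (i) every partial chain contributes at least one boundary vertex to $N_G(X)$ among its internals; (ii) every full chain with an endpoint outside $X_H$ forces that endpoint into $N_G(X)\cap V(H)$; and (iii) every chain incident to $u\in X_H$ whose internal neighbour $w_1$ lies outside $X$ places $w_1$ into $N_G(X)$. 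These contributions are readily shown to be pairwise essentially disjoint. The expansion of $H$ will then be invoked in two cases: if $|X_H|\le m/2$ it is applied to $X_H$, producing $\ge \beta|X_H|$ vertices of $V(H)$ at graph-distance $1$ from $X_H$, each of which gives rise via a distinct chain-trace to a different vertex of $N_G(X)$; if $|X_H|>m/2$ it is applied instead to $V(H)\setminus X_H$, which then has fewer than $m/2$ vertices, and in this regime the constraint $|X|\le n/2$ forces $|X_I|\le (p-1)dm/4$. In either case, balancing the three contributions above against the crude bound $|X|\le |X_H|+(p-1)(|F|+|P|)$ by a short weighted counting will yield $|N_G(X)|\ge \frac{c}{p-1}|X|$ for an absolute $c>0$.

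The main obstacle will be the sub-regime of the first case in which $|X_H|\ll |X_I|$, where the bound has to come almost entirely from internal structure. Here a full chain uses up $p-1$ units of the $|X|$-budget while yielding only $O(1)$ vertices in $N_G(X)$ through observation (ii), and partial chains convert at best one internal $X$-vertex into one boundary vertex via observation (i). This is exactly where the factor $p-1$ in the denominator of $\alpha$ is forced, matching the threshold $1/(p-1)$ in Theorem~\ref{thm:modk}; the absolute constant $c\le 1$ will emerge as the worst ratio across the finite list of sub-cases, and setting this $c$ as the constant in the proposition completes the argument.
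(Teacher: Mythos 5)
Your construction is exactly the one the paper uses: subdivide every edge of a bounded-degree expander $p-1$ times, observe that every cycle of the subdivision comes from a cycle of the base graph and hence has length divisible by $p$, and then argue that the subdivision is still a $\Theta\!\left(\frac{1}{p-1}\right)$-expander. The difference is that the paper disposes of the only non-trivial point --- the expansion of the subdivided graph --- by citing a known fact (Proposition~4.2 in~\cite{krivel}: subdividing each edge of a maximum-degree-$\Delta$ $\alpha$-expander $\ell$ times yields a $c_\Delta\frac{\alpha}{\ell}$-expander), applied to cubic expanders with $\ell=p-1$. You instead set out to reprove this fact from scratch, which is legitimate, but your write-up does not actually carry out the decisive estimates: the phrases ``readily shown to be pairwise essentially disjoint'', ``a short weighted counting will yield'', and ``the absolute constant $c\le 1$ will emerge'' stand in for precisely the part of the argument that needs to be done, and you explicitly flag the hardest sub-regime (many full chains with both endpoints in $X_H$) as ``the main obstacle'' without resolving it. In that regime the bound must come from the expansion of $H$ applied to $X_H$, and it is where the constant degrades to roughly $\beta/d$ rather than $1$; note also that boundary vertices produced at endpoints in $V(H)$ can be shared by up to $d$ chains, a factor you gloss over.

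The case $|X_H|>m/2$ as sketched is genuinely incomplete: observations (i)--(iii) together with the crude budget $|X|\le |X_H|+(p-1)(|F|+|P|)$ do not suffice there. For example, when $|X_H|$ is only slightly above $m/2$ and the roughly $dm/4$ full chains are all placed on edges incident to $X_H$, the non-full chains incident to $X_H$ may number only $O(d)$, while you need $\frac{c}{p-1}|X|=\Theta(dm)$ boundary vertices. What saves the argument is an additional split on $|W|$ with $W=V(H)\setminus X_H$: if $|W|=\Omega(m)$, each of the at least $\beta|W|$ vertices of $N_H(W)\subseteq X_H$ yields a boundary vertex on its chain towards $W$, losing a factor $d$ for coincidences at $W$-endpoints; if $|W|=o(m)$, then almost all $dm/2$ chains are incident to $X_H$ and at most $|X_I|/(p-1)\le dm/4$ of them are full, so $\Omega(dm)$ non-full incident chains each contribute a distinct internal boundary vertex. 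With this extra case distinction (and the analogous bookkeeping in the regime $|X_H|\le m/2$) your plan can be completed with $c$ on the order of $\beta/d^2$, which is an absolute constant; alternatively, and most simply, replace the whole verification by the citation of Proposition~4.2 of~\cite{krivel}, which is what the paper does.
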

In particular, the graphs from the above proposition contain no cycles of length congruent to $r$ modulo $k$ for every $r \in \{1,\ldots,k-1\}$ not divisible by $p$ (for instance, for $r=1$). 

\medskip

\textbf{Structure of the paper.} In Section~\ref{sec:not} we give a brief summary of notation used in the paper. In the remainder, we present the proofs of Theorem~\ref{thm:modk} and Proposition~\ref{prop:nomodk} in Section~\ref{sec:proofs}. We end the paper in Section~\ref{sec:op} with a few open problems. 

\section{Notation}\label{sec:not}
Given a graph $G$, we denote by $V(G)$ its vertex-set, by $E(G)$ its edge-set, and by $v(G)$ and $e(G)$ the respective sizes. Given a subset $X \subseteq V(G)$, we denote by $G[X]$ the subgraph of $G$ induced by the vertices in $X$, and we denote $G-X:=G[V(G)\setminus X]$. We use the notation $N_G(X):=\{v \in V(G)\setminus X|v \text{ has a neighbor in }X\}$ for the \emph{external neighborhood} of $X$ in $G$. The diameter of a connected graph is the maximum pairwise distance of two vertices in the graph. Given a path $P$ or a cycle $C$ in a graph, we denote by $V(P), V(C)$ the sets of their vertices and by $\ell(P), \ell(C)$ their lengths (i.e., their number of edges). Given an integer $k \in \mathbb{N}$, we denote by $\mathbb{Z}_k$ the cyclic additive group of order $k$, whose elements we sometimes represent by the numbers $\{0,\ldots,k-1\}$. Given an integer $r \in \mathbb{Z}$, we denote by $r \text{ mod }k \in \mathbb{Z}_k \simeq \{0,1,\ldots,k-1\}$ its congruent residue modulo $k$. Given two subsets $A_1, A_2$ of $\mathbb{Z}_k$, we denote by $A_1+A_2:=\{a_1+a_2|a_1 \in A_1, a_2 \in A_2\}$ their sumset. 

\section{Proofs}\label{sec:proofs}
In this section, we prove Theorem~\ref{thm:modk} and Proposition~\ref{prop:nomodk}. We prepare the proof of Theorem~\ref{thm:modk} with several auxiliary statements, starting with the following basic but crucial ``removal''-lemma. It roughly speaking states that if we delete a sufficiently small proportion of vertices from an $\alpha$-expanding graph, we can retain most of its expansion properties after ``cleaning'' the graph from small separators by deleting some additional vertices. We note that statements similar (but not identical) to Lemma~\ref{lemma:delete} appeared in the literature before, for instance in~\cite{krivel}. 

\begin{lemma}\label{lemma:delete}
Let $0<\beta<\alpha$, and let $G$ be an $n$-vertex $\alpha$-expander. Then for every non-empty set of vertices $X \subseteq V(G)$ with $|X| \le \frac{(\alpha-\beta)^2}{4(\alpha-\beta)+2}n$ there exists $Y \subseteq V(G)\setminus X$ of size $|Y| < \frac{|X|}{\alpha-\beta}$ such that $G-(X \cup Y)$ is a $\beta$-expander. 
\end{lemma}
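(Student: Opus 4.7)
The plan is to let $Y$ be an inclusion-maximal witness to the failure of $\beta$-expansion in $G - X$. If $G - X$ is already a $\beta$-expander, I would take $Y = \emptyset$; otherwise, let $T \subseteq V(G) \setminus X$ be an inclusion-maximal non-empty set with $|T| \le (n-|X|)/2$ and $|N_{G-X}(T)| < \beta |T|$, and set $Y := T$.

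The size bound $|Y| < |X|/(\alpha-\beta)$ then comes for free from the $\alpha$-expansion of $G$: since $|T| \le n/2$, we have $|N_G(T)| \ge \alpha |T|$, and combining with $N_G(T) \subseteq N_{G-X}(T) \cup X$ and the defect $|N_{G-X}(T)| < \beta |T|$ yields $\alpha|T| - |X| < \beta|T|$, i.e., $|T| < |X|/(\alpha-\beta)$.

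The heart of the proof is verifying that $G - (X \cup T)$ is $\beta$-expanding. I would argue by contradiction: suppose $S \subseteq V(G) \setminus (X \cup T)$ satisfies $|S| \le (n - |X| - |T|)/2$ and $|N_{G-(X \cup T)}(S)| < \beta|S|$, and aim to show that $T \cup S$ is a witness in $G - X$ strictly containing $T$, contradicting maximality. From the definitions one has $N_{G-X}(T \cup S) \subseteq N_{G-X}(T) \cup N_{G-(X \cup T)}(S)$, which yields $|N_{G-X}(T \cup S)| < \beta|T \cup S|$; and repeating the argument of the previous paragraph for $S$ with the deleted set $X \cup T$ in place of $X$ gives $|S| < (|X|+|T|)/(\alpha-\beta)$.

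The main obstacle is verifying $|T \cup S| \le (n-|X|)/2$, which is what places $T \cup S$ into the admissible size range required to contradict the maximality of $T$. Summing the size estimates gives $|T \cup S| \le |X|\bigl(2(\alpha-\beta)+1\bigr)/(\alpha-\beta)^2$, and the hypothesis $|X| \le \frac{(\alpha-\beta)^2 n}{4(\alpha-\beta)+2}$ is precisely calibrated so that this quantity is at most $n/2$. With $|T \cup S| \le n/2$ in hand I can invoke the $\alpha$-expansion of $G$ a third time, now on $T \cup S$, to sharpen the bound to $|T \cup S| < |X|/(\alpha-\beta)$; since $\alpha - \beta \le 1$ the same hypothesis implies $|X|/(\alpha-\beta) \le (n-|X|)/2$, so $T \cup S$ indeed lies in the admissible range and contradicts the maximality of $T$, completing the proof.
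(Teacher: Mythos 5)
Your proof is correct and follows essentially the same strategy as the paper: delete a maximal set violating $\beta$-expansion in $G-X$, bound its size by applying the $\alpha$-expansion of $G$ to it, and use its maximality to show that expansion is restored after its removal. The only real difference is how maximality is exploited: the paper takes a maximum-cardinality violator among sets of size at most $n/2$ and verifies expansion of $G-(X\cup Y)$ directly by distinguishing the cases $|Y\cup U|\le n/2$ and $|Y\cup U|>n/2$ (the hypothesis on $|X|$ entering only in the second case), whereas you take an inclusion-maximal violator under the threshold $(n-|X|)/2$ and use the hypothesis on $|X|$, plus one extra application of $\alpha$-expansion to $T\cup S$, to show the union stays under that threshold — both implementations are valid, and your side condition $|X|/(\alpha-\beta)\le (n-|X|)/2$ indeed follows from the assumed bound on $|X|$ in the relevant range $\alpha\le 1$.
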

\begin{proof}
If $G-X$ is a $\beta$-expander, then the statement holds with $Y:=\emptyset$. So assume from now on that $G-X$ is not a $\beta$-expander. This implies that $$\mathcal{Z}:=\left\{Z \subseteq V(G) \setminus X \Bigm| |N_{G-X}(Z)|<\beta |Z| \text{ and } |Z| \le \frac{n}{2}\right\} \neq \emptyset.$$ Let $Y$ be chosen as a member of the above set collection of maximum size. Then we have $|N_{G-X}(Y)|<\beta|Y|$ and $|N_{G}(Y)| \ge \alpha |Y|$, which implies
$$\alpha |Y| \le |N_G(Y)| \le |N_{G-X}(Y)| + |X|<\beta|Y|+|X|,$$ and hence $|Y|<\frac{|X|}{\alpha-\beta}$. It remains to be shown that for every $U \subseteq V(G-(X \cup Y))$ with $|U| \le \frac{v(G-(X\cup Y))}{2}$ we have $|N_{G-(X \cup Y)}(U)| \ge \beta |U|$. So let a set $U \subseteq V(G-(X \cup Y))$ with $|U| \le \frac{v(G-(X\cup Y))}{2}<\frac{n}{2}$ be given arbitrarily, and let us show that $|N_{G-(X \cup Y)}(U)| \ge \beta|U|$. Since the latter holds trivially if $U=\emptyset$, in the following we may assume $U \neq \emptyset$. Next, we distinguish between two possible cases, depending on whether $|Y \cup U| \le \frac{n}{2}$, or $|Y \cup U|>\frac{n}{2}$. 

Suppose first that $|Y \cup U| \le \frac{n}{2}$. Then we know by the maximality of $Y$ within $\mathcal{Z}$ that $|N_{G-X}(Y \cup U)| \ge \beta |Y \cup U|$. This yields
$$\beta|Y \cup U| \le |N_{G-X}(Y \cup U)| \le |N_{G-X}(Y) \cup N_{G-(X \cup Y)}(U)|$$ $$\le |N_{G-X}(Y)|+|N_{G-(X \cup Y)}(U)|<\beta|Y|+|N_{G-(X \cup Y)}(U)|.$$ Subtracting $\beta|Y|$ from both sides of the above inequality now yields $|N_{G-(X \cup Y)}(U)| \ge \beta|U|$, as desired. 
Second, suppose that $|Y \cup U|>\frac{n}{2}$, i.e., $|U|>\frac{n}{2}-|Y|$. We can now estimate as follows:

$$|N_{G-(X \cup Y)}(U)| \ge |N_{G}(U)|-|X|-|Y| \ge \alpha|U|-|X|-|Y|>\alpha|U|-\left(1+\frac{1}{\alpha-\beta}\right)|X|$$ $$=\beta|U|+(\alpha-\beta)|U|-\left(1+\frac{1}{\alpha-\beta}\right)|X| > \beta|U|+(\alpha-\beta)\frac{n}{2}-(\alpha-\beta)|Y|-\left(1+\frac{1}{\alpha-\beta}\right)|X|$$
$$>\beta|U|+(\alpha-\beta)\frac{n}{2}-\left(2+\frac{1}{\alpha-\beta}\right)|X| \ge \beta|U|,$$
where for the last inequality we used that $|X| \le \frac{(\alpha-\beta)^2}{4(\alpha-\beta)+2}n$ by assumption. 

We have thus shown that indeed, $G-(X \cup Y)$ is a $\beta$-expander, and this concludes the proof of the lemma.
\end{proof}

In the following two lemmas, we collect two additional ingredients for our proof of Theorem~\ref{thm:modk}. The first lemma is a simple (Folklore-)fact about expanding graphs which can be found explicitly in~\cite{krivel}, while the second is a powerful result proved by Friedman and Krivelevich in~\cite{friedman}.

\begin{lemma}[cf.~Corollary~3.2 in~\cite{krivel}]\label{lemma:diameter}
Let $\alpha>0$. Then every $\alpha$-expanding $n$-vertex graph has diameter at most $\left\lceil\frac{2(\ln n -1)}{\ln(1+\alpha)}\right\rceil$.
\end{lemma}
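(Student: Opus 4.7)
The plan is to execute a BFS-based meet-in-the-middle argument, which is the standard proof of such folklore diameter bounds for expanders. Fix any two vertices $u, v \in V(G)$, and for each $w \in \{u, v\}$ and each integer $t \ge 0$ set $B_t(w) := \{x \in V(G) : \mathrm{dist}_G(w, x) \le t\}$. The aim is to show that for some $t$ of order $\Theta(\ln n / \ln(1+\alpha))$, both $B_t(u)$ and $B_t(v)$ exceed $n/2$ in size and must therefore share a common vertex $z$; a shortest $u$-$z$-path concatenated with a shortest $z$-$v$-path then yields $\mathrm{dist}_G(u, v) \le 2t$.

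The main tool is a multiplicative growth estimate for BFS balls. Whenever $|B_t(u)| \le n/2$, the $\alpha$-expansion of $G$ applied to the set $B_t(u)$ gives $|N_G(B_t(u))| \ge \alpha |B_t(u)|$, and since $B_{t+1}(u) = B_t(u) \sqcup N_G(B_t(u))$ by the definition of BFS, we deduce $|B_{t+1}(u)| \ge (1+\alpha)|B_t(u)|$. Starting from $|B_0(u)| = 1$ and iterating, this shows $|B_t(u)| \ge (1+\alpha)^t$ up to the first time $t^*$ at which the ball exceeds $n/2$, giving a bound of the form $t^* \le \lceil \log_{1+\alpha}(n/2) \rceil + O(1)$, and the same bound holds symmetrically for BFS from $v$.

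To extract the precise constant in the lemma, some numerical care is required, since naive application of the above yields $2\lceil \log_{1+\alpha}(n/2)\rceil + O(1)$ whereas the lemma claims $\lceil 2(\ln n - 1)/\ln(1+\alpha)\rceil$. I would obtain the tighter form by applying the growth argument jointly to the combined region $A_t := B_t(u) \cup B_t(v)$: this set starts at $|A_0| = 2$ and inherits the same expansion inequality $|A_{t+1}| \ge (1+\alpha)|A_t|$ whenever $|A_t| \le n/2$, so $|A_t| \ge 2(1+\alpha)^t$ as long as the growth applies. The doubled initial value absorbs one unit of $\ln$ in the exponent, replacing the $\ln(n/2)$ threshold by $\ln(n/e)$ and producing the $(\ln n - 1)$ numerator in the final bound. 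The only work is arithmetic bookkeeping around the ceiling function, and I anticipate no conceptual obstacle; the argument is essentially a direct consequence of the exponential growth of BFS balls under $\alpha$-expansion.
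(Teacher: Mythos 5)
Your opening BFS argument is fine as far as it goes: indeed $B_{t+1}(w)=B_t(w)\sqcup N_G(B_t(w))$, so $|B_{t+1}(w)|\ge(1+\alpha)|B_t(w)|$ whenever $|B_t(w)|\le n/2$, and once the balls around $u$ and $v$ each exceed $n/2$ they must meet. But, as you yourself observe, this only gives $\mathrm{dist}_G(u,v)\le \frac{2\ln(n/2)}{\ln(1+\alpha)}+2$, which is strictly weaker than the stated bound $\bigl\lceil \frac{2(\ln n-1)}{\ln(1+\alpha)}\bigr\rceil$: the discrepancy is $\frac{2(1-\ln 2)}{\ln(1+\alpha)}+2>2$, so no amount of ceiling bookkeeping closes it. The device you propose to bridge the gap is the genuine flaw. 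Growing the union $A_t=B_t(u)\cup B_t(v)$ and stopping when $|A_t|>n/2$ tells you nothing about the two balls intersecting: two disjoint balls can have union of size anywhere up to $n$, and an intersection is forced only when $|B_t(u)|+|B_t(v)|>n$. Moreover, once $|A_t|>n/2$ the expansion hypothesis no longer applies to $A_t$, so you cannot push the union up to size $n$ by the same iteration. Hence the union argument produces no bound on $\mathrm{dist}_G(u,v)$ at all; the crucial step ``the balls meet at time $t$'' is simply not supplied by it.

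The numerology is also off: with $|A_0|=2$, the condition $2(1+\alpha)^t>n/2$ is equivalent to $(1+\alpha)^t>n/4$, so the threshold moves from $\ln(n/2)$ to $\ln n-2\ln 2$, not to $\ln n-1$; and even if one (incorrectly) declared an intersection as soon as $|A_t|>n/2$, the resulting bound $\frac{2(\ln n-2\ln 2)}{\ln(1+\alpha)}+2$ still exceeds the stated one whenever $\ln(1+\alpha)>2\ln 2-1$, i.e.\ for $\alpha$ roughly above $0.47$, so the claimed constant would not be recovered even formally. Note also that the paper offers no proof to compare against: the lemma is imported from Krivelevich's survey (Corollary 3.2 there), so to obtain the precise stated constant you would need to reproduce the argument from that source or genuinely sharpen the ball-growing analysis (for instance, compare $B_a(u)$ and $B_b(v)$ with $a+b=\mathrm{dist}_G(u,v)-1$, which are disjoint and hence have sizes summing to at most $n$, and exploit integrality of neighbourhood sizes), rather than treat it as ``arithmetic bookkeeping around the ceiling function.'' For what it is worth, the weaker bound your correct first step does give is at most $\frac{2\ln n}{\ln(1+\alpha)}$ for $\alpha\le 1$ (since $\frac{2\ln 2}{\ln(1+\alpha)}\ge 2$), and that is all the present paper ever uses of the lemma.
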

\begin{lemma}[cf.~Theorem~1 in~\cite{friedman}]\label{lemma:cyclelength}
Let $\alpha>0$. Then there exist constants $A=A(\alpha), N=N(\alpha) \in \mathbb{N}$ and $a_1=a_1(\alpha), a_2=a_2(\alpha)>0$ such that every $n$-vertex $\alpha$-expanding graph $G$ with $n \ge N$ satisfies the following:

For every integer $\ell \in [a_1\ln n, a_2n]$, there is a cycle in $G$ whose length is between $\ell$ and $\ell+A$. 
\end{lemma}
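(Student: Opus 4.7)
The plan is to work with a BFS tree in $G$ and use its non-tree edges to produce cycles of controlled lengths, relying on $\alpha$-expansion both to force geometric layer growth and to guarantee that the tree branches quickly near its root. Concretely, given a target $\ell \in [a_1 \ln n, a_2 n]$, I aim to find a non-tree edge $xy$ whose fundamental cycle in $T + xy$ has length in $[\ell, \ell + A]$ for a constant $A = A(\alpha)$.

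First, I fix an arbitrary root $v_0 \in V(G)$ and build a BFS tree $T$; write $L_i$ for the set of vertices at distance exactly $i$ from $v_0$. Iterating the expansion inequality starting from $\{v_0\}$ yields $|L_0 \cup \cdots \cup L_i| \ge \min\{n/2, (1+\alpha)^i\}$, so the layers grow at geometric rate until roughly half of $V(G)$ is exhausted, and by Lemma~\ref{lemma:diameter} the depth of $T$ is $O_\alpha(\ln n)$. Choosing $a_1 = a_1(\alpha)$, $a_2 = a_2(\alpha)$, and $N = N(\alpha)$ appropriately, one ensures that for every $\ell$ in the required window and $k := \lceil \ell/2 \rceil$ the layer $L_k$ lies in the ``growing'' regime, so $|L_k|$ is a non-trivial fraction of $n$.

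Second, I use expansion to locate non-tree edges near depth $k$ whose endpoints have a shallow common ancestor. Every non-tree edge $xy$ gives a fundamental cycle of length $\mathrm{depth}(x) + \mathrm{depth}(y) - 2\,\mathrm{depth}(\mathrm{LCA}(x,y)) + 1$; selecting $xy$ with $\mathrm{depth}(x), \mathrm{depth}(y) \in \{k, k+1\}$ and $\mathrm{depth}(\mathrm{LCA}(x,y)) = O_\alpha(1)$ therefore produces a cycle of length in a constant-width window around $2k \approx \ell$. Letting $xy$ range over edges inside $L_k$, between $L_k$ and $L_{k+1}$, and between $L_{k-1}$ and $L_{k+1}$ gives access to both parities, allowing us to hit some integer in $[\ell, \ell + A]$.

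The main obstacle is quantitative control of the LCA depth, i.e., showing that $T$ is sufficiently \emph{bushy} near $v_0$. A priori, $L_k$ could be concentrated in a single deep subtree of $T$, in which case many non-tree edges inside $L_k$ would produce cycles much shorter than $2k$. To rule this out, one can preprocess $G$ via Lemma~\ref{lemma:delete}, removing a small blocking set of vertices so that the residual $\beta$-expander has a BFS tree from $v_0$ that is forced to occupy at least $(1+\beta)^r$ distinct subtrees at each small depth $r$. A counting / averaging argument over pairs in $L_k$ then guarantees that a positive fraction of non-tree edges with both endpoints near depth $k$ have LCA depth bounded by a constant depending only on $\beta$ (and hence on $\alpha$). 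Reconciling these branching guarantees across all three types of non-tree edges used above determines the final constant $A = A(\alpha)$, completing the argument.
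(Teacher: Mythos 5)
The decisive problem is the range of $\ell$. Your construction only ever produces fundamental cycles of a BFS tree $T$ rooted at $v_0$, and such a cycle has length $\mathrm{depth}(x)+\mathrm{depth}(y)-2\,\mathrm{depth}(\mathrm{LCA}(x,y))+1 \le 2\,\mathrm{depth}(T)+1$. By the very geometric layer growth you invoke (equivalently Lemma~\ref{lemma:diameter}), $\mathrm{depth}(T)=O_\alpha(\ln n)$, so every cycle obtainable this way has length $O_\alpha(\ln n)$. The lemma, however, demands cycles of every length up to $a_2 n$ (up to additive $A$), and no choice of the constants $a_1,a_2,N$ rescues this: for large $n$ one has $a_2 n \gg C_\alpha\ln n$, and for such $\ell$ the layer $L_k$ with $k=\lceil \ell/2\rceil$ is simply empty, contradicting your claim that it is a positive fraction of $n$. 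So the approach cannot cover the bulk of the required interval; producing cycles of length $\Theta(n)$, and indeed of every prescribed length in $[\Theta(\ln n),\Theta(n)]$ up to constant error, needs a genuinely different mechanism (long paths/cycles built and re-routed inside the expander), which is exactly why this statement is the main theorem of Friedman and Krivelevich~\cite{friedman}; the present paper does not reprove it but imports it as a black box.

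Even in the regime $\ell=\Theta(\ln n)$ where your plan is not vacuous, the key quantitative step is missing rather than proved: you assert that after a preprocessing step via Lemma~\ref{lemma:delete} an averaging argument yields many non-tree edges near depth $k$ whose LCA has depth $O_\alpha(1)$, but no such counting is actually carried out, and it is not clear it can be. Expansion controls the sizes of neighborhoods, not the tree topology of a BFS tree: all of $L_k$ could a priori sit in few subtrees hanging off a long initial path, and deleting a small blocking set does not obviously force the tree to branch at every small depth, nor does it guarantee that the non-tree edges you need (within $L_k$ or between consecutive layers, with both parities available) exist at all at a prescribed depth $k$. Since this is precisely where the difficulty of the statement lies, the proposal as written has an essential gap in addition to the range obstruction above.
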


We are now ready for the proof of Theorem~\ref{thm:modk}.

\begin{proof}[Proof of Theorem~\ref{thm:modk}]
Let $k>1$ be an integer, let $p$ denote the smallest prime divisor of $k$, and let $\alpha>\frac{1}{p-1}$ be a constant. W.l.o.g. we may assume that $k$ is odd, since for even $k$ we have $p=2$ and hence $\alpha>1$, which means that no non-trivial $\alpha$-expanding graphs exist, and the claim of the Theorem holds vacuously. 

Let $\varepsilon \in \left(0,\frac{1}{6}\right)$ be chosen small enough such that $\alpha-3\varepsilon>\frac{1}{p-1}$, denote $\alpha':=\alpha-\varepsilon$, and let $A=A(\varepsilon), N=N(\varepsilon) \in \mathbb{N}$ and $a_1=a_1(\varepsilon), a_2=a_2(\varepsilon)>0$ be constants as given by Lemma~\ref{lemma:cyclelength}. Fix a constant $D \in \mathbb{N}$ such that $D \ge \frac{2k}{a_1\ln(1+\varepsilon)}+\frac{Ak+2}{a_1}$ and $D \ge \frac{2p}{\varepsilon}+1$. Finally, let $n_0$ be defined as the smallest positive integer such that all of the following inequalities hold for all integers $n \ge n_0$:

\begin{align}
    & n \ge 4N \\
    & \lceil a_1 \ln n \rceil \cdot D^{k+1} \le \left\lfloor a_2\frac{n}{4}\right\rfloor \\
    & k\cdot \frac{2\ln(n)}{\ln(1+\varepsilon)}+\lceil a_1\ln n\rceil \sum_{j=1}^{k}{D^{j+1}}+Ak
\le \left\lfloor\frac{\varepsilon^2}{4\varepsilon+2} n \right\rfloor
\end{align}

Starting the proof, let us consider any given $\alpha$-expanding graph $G$ on $n \ge n_0$ vertices. Our aim is to show that $G$ contains cycles with lengths congruent to any given residue modulo $k$.

Let $R \subseteq V(G)$ be a set such that $G[R]$ is connected and $|R|=\left\lfloor \frac{\varepsilon^2}{4\varepsilon+2} n \right\rfloor$ (such a set exists, since $G$ is an $\alpha$-expander and hence connected, thus it contains a spanning tree $T$, and we can pick $R$ as the vertex-set of a properly sized subtree of $T$ by successively removing leafs). In the remainder of the proof, $R$ will be called the \emph{reservoir}. At the end of our construction, the reservoir will be used for closing paths into cycles of given length modulo $k$ in $G$. 

The key step in our proof is to construct a chain of cycles in $G-R$, each two consecutive of which are connected by a short path, in such a way that the ends of this cycle chain can be connected by paths of every possible length modulo $k$ through the cycle chain. For technical reasons, in our construction of this chain it is important that the lengths of the cycles are exponentially increasing. Formally, we establish the following central claim.

\medskip

\textbf{Claim~1.} There exists a number $t \in \{1,\ldots,k\}$, vertex-disjoint cycles $C_1,C_2,\ldots, C_{t}$ in $G-R$, vertex-disjoint paths $P_1,P_2,\ldots, P_{t}$ in $G$, and pairwise distinct vertices $$v_0 \in R; u_1,v_1 \in V(C_1); u_2,v_2 \in V(C_2); \ldots; u_{t} \in V(C_{t}),$$ such that all the following properties are satisfied:
\begin{itemize}
    \item For every $i \in \{1,\ldots,t\}$, we have $\ell_i \le \ell(C_i) \le \ell_i+A$, where  $\ell_i:=\lceil a_1\ln n \rceil\cdot D^{i+1}$.
    \item For every $i \in \{1,\ldots,t\}$ the path $P_i$ has endpoints $v_{i-1}$ and $u_{i}$ and is internally vertex-disjoint from all cycles $C_1,\ldots,C_{t}$ as well as from $R$.
    \item For every $i \in \{1,\ldots,t\}$, we have $|V(P_i)\setminus \{v_{i-1},u_i\}| \le \frac{2\ln(n)}{\ln(1+\varepsilon)}$. 
    \item For each $i \in \{1,\ldots,t-1\}$, let $Q_i^{1}$ and $Q_i^{2}$ denote the two internally disjoint paths in $C_i$ which connect $u_i$ and $v_i$, and let $z_i:=(\ell(Q_i^{1})-\ell(Q_i^{2})) \text{ mod }k$ denote their length difference modulo $k$. Then for every $i \in \{1,2,\ldots,t\}$ the set 
    $$B_i:=\left\{\sum_{j \in J}{z_j}\middle| J \subseteq \{1,\ldots,i-1\}\right\} \subseteq \mathbb{Z}_k$$ (summation as in the cyclic group $(\mathbb{Z}_k,+)$) contains at least $i$ elements.
    \item $B_t=\mathbb{Z}_k$. 
\end{itemize}

\begin{proof}[Proof of Claim~1.]
We prove the claim by successively constructing the vertices, cycles and paths such that they satisfy the required properties. 

We start by applying Lemma~\ref{lemma:delete} to $G$, where $X:=R$ and $\beta:=\alpha'$. This yields the existence of a set $S \subseteq V(G) \setminus R$ such that $|S| \le \frac{|R|}{\alpha-\alpha'}=\varepsilon^{-1}|R| \le \frac{\varepsilon}{4\varepsilon+2}n$ and such that $G':=G-(R \cup S)$ is an $\alpha'$-expander. Note that $\alpha'>\varepsilon$ implies $G'$ also is an $\varepsilon$-expander. Let $n':=v(G')$. By our choice of $n_0$, we have $n' \ge n-|R|-|S| \ge n-\frac{\varepsilon^2+\varepsilon}{4\varepsilon+2}n>\frac{3}{4}n \ge \frac{3}{4}n_0 \ge N$. Furthermore, $n \ge n_0$ also implies $\lceil a_1 \ln n'\rceil \le \lceil a_1 \ln n\rceil \cdot D^2 \le  \lfloor a_2 n' \rfloor$. Hence, we may invoke Lemma~\ref{lemma:cyclelength} with parameter $\ell_1=\lceil a_1 \ln n \rceil \cdot D^2$ to find a cycle $C_1$ in $G'$ whose length satisfies $\ell_1 \le \ell(C_1) \le \ell_1+A$. Let $P_1$ be a shortest path in $G$ which starts in a vertex in $R$ and ends in a vertex in $C_1$. Then clearly $P_1$ intersects each of $R$ and $V(C_1)$ in exactly one vertex, let $v_0 \in R$ and $u_1 \in V(C_1)$ be such that $V(P_1) \cap R=\{v_0\}$ and $V(P_1) \cap V(C_1)=\{u_1\}$. By Lemma~\ref{lemma:diameter}, we further know that the number of internal vertices of $P_1$ is bounded by $|V(P_1)\setminus\{v_0,u_1\}|=\ell(P_1)-1<\frac{2(\ln n -1)}{\ln(1+\alpha)} \le \frac{2\ln(n)}{\ln(1+\alpha)} \le \frac{2\ln(n)}{\ln(1+\varepsilon)}$. Finally, since we are only summing over the empty set $J$ in the definition of $B_1$, we have $B_1=\{0\} \subseteq \mathbb{Z}_k$. With this, it is readily checked that all four items of the claim with $i=1$ are satisfied. 

In the following construction, while not required by the statement in the claim, for inductive purposes we will maintain the following additional invariant while constructing the sequence of cycles and paths: 

\medskip 

For every constructed cycle $C_i$ in the sequence we have $V(C) \subseteq V(G')$, and for every constructed path $P_i$ in the sequence with $i \ge 2$ we have $V(P_i) \subseteq V(G')$. 

\medskip

Moving on, suppose that for some $i \in \{1,\ldots,k-1\}$ we have already constructed vertices $v_0,u_1,v_1,\ldots,v_{i-1},u_i$, paths $P_1,\ldots,P_{i}$ and cycles $C_1,\ldots,C_{i}$ such that the four items in the claim and the additional invariant above are satisfied for all indices $1,2,\ldots,i$. If $B_i=\mathbb{Z}_k$, then we can simply put $t:=i$, which will verify the claim. So, instead, assume in the following that $B_i \neq \mathbb{Z}_k$, and let us go about constructing the vertices $v_i,u_{i+1}$, the path $P_{i+1}$, and the cycle $C_{i+1}$. 

Consider the last already constructed cycle $C_i$. Fix a circular direction of traversal around $C_i$, and with respect to this direction, for every two distinct vertices $x,y \in V(C_i)$, denote by $C_i[x,y]$ the subpath of $C_i$ starting at $x$ and ending at $y$, following the circular orientation of the cycle from $x$ to $y$. For every vertex $x \in V(C_i)$, denote by $\text{diff}(x) \in \mathbb{Z}_k$ the quantity $(\ell(C_i[u_i,x])-\ell(C_i[x,u_i])) \text{ mod }k$, which measures by how much the two paths connecting $u_i$ and $x$ differ in length modulo $k$. 

Let $Z:=\text{stab}(B_i) \subseteq \mathbb{Z}_k$ denote the stabilizer of $B_i \subseteq \mathbb{Z}_k$ in the cyclic group $(\mathbb{Z}_k,+)$, i.e., $z \in Z$ if and only if $B_i+z=\{b+z|b \in B_i\}=B_i$. Note that $Z$ forms a subgroup of $(\mathbb{Z}_k,+)$, and that since $0 \in B_i$ by definition, we have $Z \subseteq B_i$.

Finally, let us define a set $K \subseteq V(C_t)$ of ``bad vertices'' on $C_t$ by 
$$K:=\{x \in V(C_i)\setminus\{u_i\}|\text{diff}(x) \in Z\}.$$
We now claim that $|K| \le \frac{|V(C_i)|}{p}+1$. First of all, note that $Z \subseteq B_i \subsetneq \mathbb{Z}_k$ by our assumption above. This means that $Z$ is a proper subgroup of $(\mathbb{Z}_k,+)$. Let $d>1$ be the index of $Z$ in $(\mathbb{Z}_k,+)$. Then $d$ must be a divisor of $k$, which in particular means that $p \le d$. Let now $x \in K$ be given arbitrarily. Note that we have the following equalities:

$$2\ell(C_i[u_i,x])-\ell(C_i) =\ell(C_i[u_i,x])-\ell(C_i[x,u_i]) \equiv \text{diff}(x) \text{ (}\text{mod }k).$$

Noting that $2^{-1}Z=Z$, where $2^{-1}$ denotes the multiplicative inverse of $2$ in $\mathbb{Z}_k$ (recall that $k$ is odd), we can see that $\ell(C_i[u_i,x]) \text{ mod }k \in 2^{-1}\ell(C_i)+2^{-1}Z=2^{-1}\ell(C_i)+Z$ for every $x \in K$. Since $d$, seen as an element of $\mathbb{Z}_k$, generates the subgroup $Z$, the above in turn implies that $\ell(C_i[u_i,x]) \equiv c \text{ (}\text{mod }d)$ for some constant $c \in \{0,1,\ldots,k-1\}$ and every $x \in K$. This implies that the numbers $\ell(C_i[u_i,x]), x \in K$ are of pairwise distance at least $d$ and hence, there can be no more than $\frac{|V(C_i)|}{d}+1 \le \frac{|V(C_i)|}{p}+1$ of them. Since the vertices $x \in K$ are uniquely determined by $\ell(C_i[u_i,x])$, this proves that also  $|K|\le \frac{|V(C_i)|}{p}+1$, as desired.

Let us now apply Lemma~\ref{lemma:delete} to the $\alpha'$-expander $G'$, with $\beta:=\varepsilon$ and $$X_i:=(V(P_1) \cap V(G')) \cup \bigcup_{j=2}^{i}{(V(P_j)\setminus\{v_{j-1},u_j\})} \cup \bigcup_{j=1}^{i}{V(C_j)}.$$ Note that this is possible, since
$$|X_i| \le \sum_{j=1}^{i}{|V(P_j)\setminus\{v_{j-1},u_j\}|}+\sum_{j=1}^{i}{|V(C_j)|}$$
$$\le i\cdot \frac{2\ln(n)}{\ln(1+\varepsilon)}+\lceil a_1\ln n\rceil \sum_{j=1}^{i}{D^{j+1}}+Ai$$
$$\le k\cdot \frac{2\ln(n)}{\ln(1+\varepsilon)}+\lceil a_1\ln n\rceil \sum_{j=1}^{k}{D^{j+1}}+Ak$$
$$\le \frac{\varepsilon^2}{4\varepsilon+2} n,$$
$$\le \frac{(\alpha'-\varepsilon)^2}{4(\alpha'-\varepsilon)+2} n,$$
where moving from the third to the fourth line we used that $n \ge n_0$. In particular, the above shows that $|X_i| \le \frac{n}{4}$ (note that $\alpha'-\varepsilon \le \alpha \le 1$). 

From the lemma we obtain a set $Y_i \subseteq V(G')\setminus X_i$ such that $|Y_i|<\frac{|X_i|}{\alpha'-\varepsilon} \le \frac{\alpha'-\varepsilon}{4(\alpha'-\varepsilon)+2}n<\frac{1}{4}n$ and such that $G'-(X_i \cup Y_i)$ is an $\varepsilon$-expander. Since $v(G'-(X_i \cup Y_i)) \ge n'-2\cdot \frac{1}{4}n \ge \frac{3}{4}n-\frac{n}{2}=\frac{n}{4} \ge N$ by our choice of $n_0$, we can now apply Lemma~\ref{lemma:cyclelength} to the $\varepsilon$-expander $G'-(X_i \cup Y_i)$. Again using that $n \ge n_0$, we have $$\lceil a_1\cdot \ln v(G'-(X_i \cup Y_i)) \rceil \le \lceil a_1 \ln n \rceil \le \ell_{i+1}=\lceil a_1 \ln n \rceil \cdot D^{i+2} \le \left\lfloor a_2\frac{n}{4}\right\rfloor \le  \lfloor a_2 \cdot v(G'-(X_i \cup Y_i))\rfloor,$$ so that the lemma guarantees the existence of a cycle $C_{i+1}$ in $G'-(X_i \cup Y_i)$ with length $\ell(C_{i+1}) \in [\ell_{i+1},\ell_{i+1}+A]$. Note that by definition of $G'$ and $X_{i}$, the cycle $C_{i+1}$ is disjoint from $R$, the paths $P_1,\ldots,P_i$, and the cycles $C_1,\ldots,C_i$. 

Next, we want to construct the short path $P_{i+1}$ which connects $C_i$ to $C_{i+1}$. In order to do so, we again apply Lemma~\ref{lemma:delete} to $G'$, but this time with a slightly different choice for the set $X$: Let 
$$X_i':=(V(P_1) \cap V(G')) \cup \bigcup_{j=2}^{i}{(V(P_j)\setminus\{v_{j-1},u_j\})} \cup \bigcup_{j=1}^{i-1}{V(C_j)} \cup K \cup \{u_i\}.$$
Clearly, we have $X_i' \subseteq X_i$, and thus from the above $|X_i'| \le |X_i| \le \frac{(\alpha'-\varepsilon)^2}{4(\alpha'-\varepsilon)+2} n.$ Applying Lemma~\ref{lemma:delete} with parameters $\alpha',\varepsilon$ to $G'$ and $X_i'$ now yields a subset $Y_i' \subseteq V(G) \setminus X_i'$ such that $G'-(X_i' \cup Y_i')$ is $\varepsilon$-expanding and $|Y_i'|<\frac{|X_i'|}{\alpha'-\varepsilon} < \frac{|X_i'|}{\frac{1}{p-1}+\varepsilon}$. We now claim that $V(C_{i})\setminus (X_i' \cup Y_i') \neq \emptyset$ and  $V(C_{i+1})\setminus (X_i' \cup Y_i') \neq \emptyset$. Since $\ell_i \le |V(C_i)|$ and $\ell_i \le \ell_{i+1} \le |V(C_{i+1})|$, this will be shown once we have established that $|X_i' \cup Y_i'|=|X_i'|+|Y_i'| < \ell_i$. 

We start estimating $|X_i'|$ as follows:
$$|X_i'| \le \sum_{j=1}^{i}{|V(P_j)\setminus\{v_{j-1},u_j\}|}+\sum_{j=1}^{i-1}{|V(C_j)|}+|K|+1$$
$$\le i\cdot \frac{2\ln(n)}{\ln(1+\varepsilon)}+\lceil a_1\ln n\rceil \sum_{j=1}^{i-1}{D^{j+1}}+A(i-1)+\left(\frac{|V(C_i)|}{p}+1\right)+1$$
$$< i\cdot \frac{2\ln(n)}{\ln(1+\varepsilon)}+\frac{\lceil a_1\ln n\rceil \cdot D^{i+1}}{D-1}+A(i-1)+2+\frac{A}{p}+\frac{\ell_i}{p}$$
$$\le \left(k\cdot \frac{2\ln(n)}{\ln(1+\varepsilon)}+Ak+2\right)+\left(\frac{1}{p}+\frac{1}{D-1}\right)\ell_i$$
$$=\left(\left(k\cdot \frac{2\ln(n)}{\ln(1+\varepsilon)}+Ak+2\right)-\frac{1}{D-1}\ell_i\right)+\left(\frac{1}{p}+\frac{2}{D-1}\right)\ell_i$$

We further have by our choice of $D$, that $$\frac{1}{D-1}\ell_i \ge \frac{1}{D-1}\cdot \lceil a_1 \ln n\rceil \cdot D^2> a_1 \ln (n) \cdot D \ge k\cdot \frac{2\ln(n)}{\ln(1+\varepsilon)}+Ak+2,$$ as well as $\frac{1}{p}+\frac{2}{D-1} \le \frac{1+\varepsilon}{p}$ (since $D \ge \frac{2k}{a_1\ln(1+\varepsilon)}+\frac{Ak+2}{a_1}$ and $D \ge \frac{2p}{\varepsilon}+1$). All in all, this yields the following final estimate on the size of $X_i'$:

$$|X_i'| < \frac{1+\varepsilon}{p}\ell_i.$$

Using the facts that $p \ge 3$ ($k$ is odd) and $\varepsilon<\frac{1}{6}$, one can verify that $1+\frac{1}{\frac{1}{p-1}+\varepsilon}<(1-\varepsilon)p$. Putting our estimates together, we now obtain:

$$|X_i'|+|Y_i'|<\left(1+\frac{1}{\frac{1}{p-1}+\varepsilon}\right)|X_i'| < (1-\varepsilon)p|X_i'|<(1-\varepsilon)p\frac{1+\varepsilon}{p}\ell_i=(1-\varepsilon^2)\ell_i<\ell_i,$$ as desired. This shows that indeed, $V(C_i) \setminus (X_i' \cup Y_i')$ and $V(C_{i+1}) \setminus (X_i' \cup Y_i')$ are non-empty, as claimed. Since $G'-(X_i' \cup Y_i')$ is an $\varepsilon$-expander, it is connected. Let us now finally pick the path $P_{i+1}$ as a shortest path connecting $V(C_i) \setminus (X_i' \cup Y_i')$ to $V(C_{i+1}) \setminus (X_i' \cup Y_i')$ in $G'-(X_i' \cup Y_i')$. Then clearly $P_{i+1}$ intersects both $C_i$ and $C_{i+1}$ in exactly one vertex. Define $v_i$ and $u_{i+1}$ as the unique vertices such that $V(P_{i+1}) \cap V(C_i)=\{v_i\}$ and $V(P_{i+1}) \cap V(C_{i+1})=\{u_{i+1}\}$. Note that by definition of $G'$ and $X_i'$, we know that $P_{i+1}$ is internally vertex-disjoint from $C_1,\ldots,C_{i+1}$ and $P_1,\ldots,P_{i}$. Since $P_{i+1}$ is a shortest path in the $\varepsilon$-expander $G'$, by Lemma~\ref{lemma:delete} we further know that $\ell(P_{i+1}) \le \left\lceil \frac{2\ln(n)}{\ln(1+\varepsilon)} \right\rceil$, which implies that $|V(P_{i+1})\setminus\{v_i,u_{i+1}\}|=\ell(P_{i+1})-1<\frac{2\ln(n)}{\ln(1+\varepsilon)}$. These observations show that our newly constructed $C_{i+1},P_{i+1},v_i,u_{i+1}$ satisfy the first three items in the claim (for index $i+1$), and it remains to verify that the subset $B_{i+1}$ of $\mathbb{Z}_k$, defined as in the claim, contains at least $i+1$ elements. Towards a contradiction, suppose that $|B_{i+1}| \le i$. Since the definition implies $B_i \subseteq B_{i+1}$, and since we have $|B_i| \ge i$ (as we already established the claim with index $i$), it follows that $B_{i+1}=B_i$. We claim that this implies that $z_{i} \in \text{stab}(B_i)=Z$. Towards a contradiction, suppose that $B_i+z_i \neq B_i$. This implies the existence of an element $b \in B_i$ such that $b+z_i \notin B_i$ or $b-z_i \notin B_i$. Let $q \in \mathbb{N}$ denote the order of the element $z_i$ in the cyclic group $(\mathbb{Z}_k,+)$, and consider the following sequence of elements:
$b,b+z_i,b+2z_i,\ldots,b+(q-1)z_i=b-z_i$. From the above we have that the first element of the above sequence is contained in $B_i$, while at least one element of the sequence is not contained in $B_i$. This implies the existence of some $j \in \mathbb{N}$ such that $b+(j-1)z_i \in B_i$ but $b+jz_i \notin B_i$. The definition of $B_{i+1}$ then however immediately yields $b+jz_i=(b+(j-1)z_i)+z_i \in B_i+z_i \subseteq B_{i+1}=B_i$, a contradiction. This shows that indeed, we must have $z_{i} \in Z$. Noting that by definition (possibly after renaming $Q_i^1$ and $Q_i^2$), we have $z_{i}=\text{diff}(v_i)$, this directly implies that $v_i \in K \cup \{u_i\}$.

However, since $v_i \notin X_i' \supseteq K \cup \{u_i\}$ we know that the latter is not the case. This is a contradiction to the above assumption that $B_{i+1}$ contains at most $i$ elements, and shows that indeed $|B_{i+1}| \ge i+1$, as required by the fourth item in the claim. 

Our above argumentation shows that we may construct the vertices, paths and cycles satisfying the requirements of the claim successively, up until the first time in the iterative construction the last constructed set $B_i$ satisfies $B_i=\mathbb{Z}_k$. In this case, we put $t:=i$ and verify the claim in this way. From the invariant $|B_i| \ge i$ which we maintain during the construction process, we see that this must happen after at most $k$ steps, which finally implies that the value of $t$ we find in this way indeed satisfies $t \le k$. 
This concludes the proof of Claim~1.
\end{proof}

With Claim~1 at hand, it is now easy to conclude the proof of the theorem. Essentially, we will now close paths contained in our cycle-chain constructed in Claim~1 into cycles, by establishing a connection from $C_t$ to $R$ which is internally vertex-disjoint from all the cycles and paths $P_1,P_2,\ldots,P_t,C_1,\ldots,C_t$. 

\medskip

\textbf{Claim~2.} There exists a path $P$ in $G$ with endpoints $x \in V(C_t)$, $y \in R$ such that $P$ is internally vertex-disjoint from the paths $P_1,\ldots,P_t$ and disjoint from the cycles $C_1,\ldots,C_t$, and such that $V(P) \cap V(C_t)=\{x\}, V(P) \cap V(R)=\{y\}$. 

\begin{proof}[Proof of Claim 2.]
Let $X:=\bigcup_{i=1}^{t}{\left(V(P_i)\setminus\{v_{i-1},u_i\}\right)} \cup \bigcup_{i=1}^{t-1}{V(C_i)}$. We claim that there exists a path in $G-X$ starting in a vertex in $V(C_t)$ and ending in a vertex of $R$. Once this fact is established, it will imply the claim by picking $P$ as a shortest such path in $G-X$.  

Towards a contradiction, suppose such a path does not exist. This implies that there exists a partition of the vertex-set of $G-X$ into two disjoint parts $S \supseteq V(C_t)$ and $T \supseteq R$ such that no edge in $G-X$ connects $S$ and $T$. This in particular implies that $N_G(S), N_G(T) \subseteq X$. Note that at least one of $S$ and $T$ has size at most $\frac{n}{2}$. Since $G$ is an $\alpha$-expander, this implies that $|N_G(S)| \ge \alpha|S|$ or $|N_G(T)| \ge \alpha|T|$. In each case, it follows that $$|X| \ge \alpha\min\{|S|,|T|\} \ge \alpha\min\{|V(C_t)|,|R|\} \ge \alpha\min\left\{\lceil a_1\ln(n)\rceil \cdot D^{t+1},\left\lfloor \frac{\varepsilon^2}{4\varepsilon+2}n\right\rfloor\right\}.$$
On the other hand, estimating the different parts of $X$ yields: 
$$|X| \le \sum_{j=1}^{t}{|V(P_j)\setminus\{v_{j-1},u_j\}|}+\sum_{j=1}^{t-1}{|V(C_j)|}$$
$$\le t\cdot \frac{2\ln(n)}{\ln(1+\varepsilon)}+\lceil a_1\ln n\rceil \sum_{j=1}^{t-1}{D^{j+1}}+At$$
$$< k\cdot \frac{2\ln(n)}{\ln(1+\varepsilon)}+Ak+\lceil a_1\ln n\rceil \frac{D^{t+1}}{D-1}$$
$$\le 2\lceil a_1\ln n \rceil \frac{D^{t+1}}{D-1}$$
$$\le \alpha \lceil a_1\ln n \rceil D^{t+1},$$
where moving from the third to the fourth line we used that $D \ge \frac{2k}{a_1\ln(1+\varepsilon)}+\frac{Ak+2}{a_1}$, while moving from the fourth to the fifth line we used that $D\ge \frac{2p}{\varepsilon}+1>\frac{2}{\alpha}+1$.

Further, our assumptions on $n_0$ and $n \ge n_0$ imply that that $\lceil a_1\ln n \rceil D^{t+1} \le \lceil a_1\ln n \rceil D^{k+1} \le \left\lfloor\frac{\varepsilon^2}{4\varepsilon+2}\right\rfloor n$.
Together, these estimates yield that $$|X|<\alpha \lceil a_1\ln n \rceil D^{t+1}=\alpha\min\left\{\lceil a_1\ln(n)\rceil \cdot D^{t+1},\left\lfloor \frac{\varepsilon^2}{4\varepsilon+2}n\right\rfloor\right\},$$ contradicting the above. This contradiction proves Claim~2.
\end{proof}

Let us fix some final notation before concluding the proof of the theorem. Recall from Claim~1 that for every $i \in \{1,\ldots,t-1\}$ we denote by $Q_{i}^1$ and $Q_i^2$ the two internally disjoint subpaths of $C_i$ connecting $u_i$ to $v_i$. We also let $Q_t$ denote a subpath of the cycle $C_t$ connecting $u_t$ to $x$ (the latter path may consist only of a single vertex if $u_t=x$). Further, let $Q_0$ denote a path in $G[R]$ connecting $y$ to $v_0$ (note that such a path exists, since $R$ was initially chosen such that it induces a connected subgraph of $G$). Again, $Q_0$ may consist of a single vertex only. Finally, for every subset $J \subseteq \{1,\ldots,t-1\}$ let us denote by $C(J)$ the cycle in $G$ which is obtained as the union of the following paths: 

$$(P_1,\ldots,P_t), (Q_i^1, i \in J); (Q_i^2, i \notin J); Q_t, P, Q_0$$
We claim that for every residue $r \in \{0,\ldots,k-1\}$ there exists a subset $J$ of $\{1,\ldots,t-1\}$ such that $\ell(C(J)) \equiv r \text{ (mod }k)$, which will prove that $G$ contains cycles of all parities modulo $k$, and hence the theorem statement. 

Indeed, it is easily seen by definition of the cycles that $$\ell(C(J))=\ell(C(\emptyset))+\sum_{j \in J}{(\ell(Q_i^{1})-\ell(Q_i^{2}))}$$ for every $J \subseteq \{1,\ldots,t-1\}$. As $B_t=\mathbb{Z}_k$ by Claim~1, we see that $\sum_{j \in J}{(\ell(Q_i^{1})-\ell(Q_i^{2}))}$, taken modulo $k$, attains all possible residues modulo $k$ as $J$ ranges over the subsets of $\{1,\ldots,t-1\}$. This, however, means that also $\ell(C(J)) \text{ mod }k$ takes on all possible residues modulo $k$ as $J$ varies on the subsets of $\{1,\ldots,t-1\}$. Finally, this proves the above assertion and concludes the proof of the theorem.
\end{proof}

We conclude this section with the proof of Proposition~\ref{prop:nomodk}.

\begin{proof}[Proof of Proposition~\ref{prop:nomodk}]
It is known (cf.~\cite{krivel}, Proposition 4.2) that for any fixed positive integer $\Delta$ there exists a constant $c_\Delta>0$ such that for every integer $\ell$ and given any $\alpha$-expander $G$ of maximum degree $\Delta$, the graph $G^{(\ell)}$ obtained from $G$ by subdividing every edge of $G$ exactly $\ell$ times is a $c_\Delta\cdot\frac{\alpha}{\ell}$-expander.

Let $(G_n)_{n=1}^\infty$ be an infinite sequence of $\alpha_0$-expanding cubic graphs, where $\alpha_0$ is an absolute constant, and $v(G_n) \rightarrow \infty$ for $n \rightarrow \infty$. The existence of such a constant $\alpha_0$ and such a sequence can be seen in multiple ways, one possible source of examples are the random cubic graphs $G_{n,3}$, we refer to the article~\cite{bollobas2} for an analysis of their expansion properties. Further, in the recent article~\cite{alon2} the interested reader may find a method for explicitely constructing bounded degree expander graphs.

Let now $k>1$ be a given integer with smallest prime divisor $p$. Consider the sequence of graphs $\left(G_n^{(p-1)}\right)_{n=1}^{\infty}$. By the above all graphs in this seqeunce are $\frac{c_3\alpha_0}{p-1}$-expanding, and graphs in the sequence can be arbitrarily large. However, since every edge in $G_n$ is turned into a path of length $p$ in $G_n^{(p-1)}$, we can see that every cycle in $G_n^{(p-1)}$ is of length divisible by $p$. This implies the assertion of the proposition with the constant $c:=c_3\alpha_0$. 
\end{proof}

\section{Conclusion}\label{sec:op}
In this note, we have demonstrated conditions which guarantee the existence of cycles of all lengths modulo $k$ in large $\alpha$-expanding graphs. The following questions remain open. 

\begin{itemize}
    \item Looking into our proof of Theorem~\ref{thm:modk}, one can see that the smallest value of $n_0$ for which we may guarantee the conclusion of the theorem grows at least as fast as $\Omega(k^k)$. Is it possible to obtain the same conclusion as in Theorem~\ref{thm:modk} with a more moderate assumption on the size of the graphs?
    \item Can we drop the assumption $\alpha=\Omega\left(\frac{1}{p-1}\right)$ in the statement of Theorem~\ref{thm:modk} if we assume additional conditions on the graph $G$, such as sufficiently good connectivity? Is it true that for every $\alpha>0$ and odd $k \in \mathbb{N}$ every sufficiently large $3$-connected $\alpha$-expander contains cycles of all lengths modulo $k$?
\end{itemize}


\begin{thebibliography}{99}
    
    \bibitem{alon2} N.~Alon. \newblock{Explicit expanders of every degree and size}, \newblock{\em Combinatorica}, \textbf{41} (2021), 447--463. 
    
    \bibitem{alon3} N.~Alon. \newblock{Eigenvalues and expanders}, \newblock{\em Combinatorica}, \textbf{6} (1986), 83--96. 
    
    \bibitem{alon} N.~Alon and M.~Krivelevich,
    \newblock{Divisible subdivisions},
    \newblock{\em J. Graph Theory} \textbf{98} (2021), 623--629.
    
    \bibitem{alon4} N.~Alon and N.~Linial, \newblock{Cycles of length $0$ modulo $k$ in directed graphs}, \newblock{\em J. Combin. Theory, Ser. B} \textbf{47} (1989), 114--119.
    
    \bibitem{alonspencer} N.~Alon and J.~H.~Spencer. \newblock{\bf The probabilistic method}, \newblock{4th edition}, \newblock{Wiley, New York} (2015) 
    
    \bibitem{bollobas} B.~Bollob\'{a}s,
    \newblock{Cycles modulo $k$},
    \newblock{\em Bull. Lond. Math. Soc.} \textbf{9} (1977), 97--98.
    
    \bibitem{bollobas2} B.~Bollob\'{a}s,
    \newblock{The isoperimetric number of random regular graphs},
    \newblock{\em European J. Combin.} \textbf{9} (1988), 241--244.
    
    \bibitem{bondy} J.~A. Bondy, \newblock{Pancyclic graphs, I}, \newblock{\em J. Combin. Theory, Ser. B} \textbf{11} (1971), 80--84. 
    
    \bibitem{chen} G.~T.~Chen and A.~Saito, \newblock{Graphs with a cycle of length divisible by three}, \newblock{\em J. Combin. Theory, Ser. B} \textbf{60} (1994), 277--292. 
    
    \bibitem{dean} N.~Dean, L.~Lesniak and A.~Saito, \newblock{Cycles of length $0$ modulo $4$ in graphs}, \newblock{\em Discrete Math.} \textbf{121} (1993), 37--49.
    
    \bibitem{erdos2} P.~Erd\H{o}s, \newblock{Some recent problems and results in graph theory, combinatorics and number theory}, \newblock{\em Proc. 7th Southeast Conf. Combin., Graph Theory, Comput.} (1976), 3--14. 
    
    \bibitem{erdos} P.~Erd\H{o}s, R.~Faudree, C.~Rousseau and R.~Schelp, \newblock{The number of cycle lengths in graphs of given minimum degree and girth}, \newblock{\em Discrete Math.} \textbf{200} (1999), 55--60. 
    
    \bibitem{fan} G.~Fan, \newblock{Distribution of cycle lengths in graphs}, \newblock{\em J. Combin. Theory, Ser. B} \textbf{82} (2002), 187--202. 
    
    \bibitem{friedman} L.~Friedman and M.~Krivelevich. \newblock{Cycle lengths in expanding graphs},     \newblock{\em Combinatorica} \textbf{41} (2021), 53--74.
    
    \bibitem{gao} J. Gao, Q. Huo, C.-H. Liu and J. Ma, \newblock{A unified proof of conjectures on cycle lengths in graphs}, \newblock{\em Int. Math. Res. Not.} rnaa324 (2021). 
    
    \bibitem{gyarfas} A. Gy\'{a}rf\'{a}s, J. Koml\'{o}s and E. Szemer\'{e}di, \newblock{On the distribution of cycle lengths in graphs}, \newblock{\em J. Graph Theory} \textbf{8} (1984), 441--462. 
    
    \bibitem{hoory} S.~Hoory, N.~Linial and A.~Wigderson. \newblock{Expander graphs and their applications}, \newblock{\em Bull. Amer. Math. Soc.} \textbf{43} (2006), 439--561.
    
    \bibitem{kawarabayashi} K. Kawarabayashi and B. Reed. \newblock{A separator theorem in minor-closed classes}, \newblock{\em Proc. 51st Sympos. Found. Comput. Sci. (FOCS'10)} (2010), 153--162.
    
    \bibitem{kostochka} A.~V.~Kostochka, B.~Sudakov and J.~Verstra\"{e}te, \newblock{Cycles in triangle-free graphs of large chromatic number}, \newblock{\em Combinatorica}, \textbf{37} (2017), 481--494.
    
    \bibitem{krivel} M.~Krivelevich. \newblock{Expanders---how to find them, and what to find in them},
    \newblock{\em Surveys in Combinatorics} \textbf{456} (2019), 115--142.
    
    \bibitem{krivel2} M.~Krivelevich. 
    \newblock{Finding and using expanders in locally sparse graphs}, \newblock{\em SIAM J. Discrete Math.} \textbf{32} (2018), 611--623.
    
    \bibitem{liu} C.~H.~Liu and J.~Ma. \newblock{Cycle lengths and minimum degree of graphs}, \newblock{\em J. Combin. Theory, Ser. B} \textbf{128} (2018), 66--95. 
    \bibitem{lyngsie} K.~S.~Lyngsie and M.~Merker. \newblock{Cycle lengths modulo $k$ in large $3$-connected cubic graphs}, \newblock{\em Advances in Combinatorics} \textbf{3} (2021), 36 pp., \url{https://doi.org/10.19086/aic.18971}
    
        \bibitem{tamas} 
    T.~M\'{e}sz\'{a}ros and R.~Steiner,
    \newblock{Zero sum cycles in complete digraphs},
    \newblock{\em European J. Combin.} \textbf{98} (2021), 103399.
    
    \bibitem{mihok} P.~Mih\'{o}k and I.~Schiermeyer, \newblock{Cycle lengths and chromatic number of graphs}, \newblock{\em Discrete Math.} \textbf{286} (2004), 147--149. 
    
    \bibitem{sudakov} B.~Sudakov and J.~Verstra\"{e}te, \newblock{Cycle lengths in sparse graphs}, \newblock{\em Combinatorica} \textbf{28} (2008), 357--372. 
    
    \bibitem{thomassen} C. Thomassen, \newblock{Graph decomposition with applications to subdivisions and path systems modulo $k$}, \newblock{\em J. Graph Theory} \textbf{8} (1983), 261--271. 
    \bibitem{thomassen2} C. Thomassen, \newblock{\em Girth in graphs}, \newblock{\em J. Combin. Theory, Ser. B} \textbf{35} (1983), 129--141.
    \bibitem{verstraete} J.~Verstra\"{e}te, \emph{On arithmetic progressions of cycle lengths in graphs}, \newblock{\em Combin. Probab. Comput.} \textbf{9} (2000), 369--373.

\end{thebibliography}
\end{document}